\newcommand{\A}{\mathcal{A}}
\newcommand{\C}{\mathcal{C}}
\newcommand{\G}{{\mathcal G}}
\newcommand{\p}{{\mathcal P}}
\newcommand{\pw}{\varPi_{{k}}}
\newcommand{\iprod}[1]{\langle#1\rangle}
\newcommand{\bigiprod}[1]{\bigl\langle#1\bigr\rangle}
\newcommand{\W}{{\mathcal W}}
\newcommand{\I}{{\mathcal I}}
\newcommand{\R}{\mathbb{R}}
\newcommand{\cD}{\rm ^c D}
\begin{document}
\title{A discontinuous Galerkin  method for
time fractional diffusion equations with variable coefficients\thanks{The support of  the Science Technology Unit at KFUPM through  King Abdulaziz City for Science and
Technology (KACST) under National Science, Technology and Innovation Plan (NSTIP) project No. 13-MAT1847-04
 is gratefully acknowledged.}}

\date{\today}
\journalname{Numerical Algorithms}

\author{K. Mustapha$^1$, B. Abdallah$^2$, K.M. Furati$^3$ \and M. Nour$^4$}
\institute{
Kassem Mustapha, \email{kassem@kfupm.edu.sa}\\
Khaled  Furati, \email{kmfurati@kfupm.edu.sa}
\at  Department of Mathematics and Statistics,
King Fahd University of Petroleum and Minerals, Dhahran, 31261, Saudi Arabia.\\
}
\date{\today}

\maketitle

\begin{abstract}
 We propose a piecewise-linear, time-stepping discontinuous Galerkin  method  to solve numerically a time fractional diffusion equation involving Caputo derivative of order $\mu\in (0,1)$ with variable coefficients. For the spatial discretization, we apply the standard piecewise linear continuous Galerkin method. Well-posedness of the fully discrete  scheme and error analysis will be shown.
For a time interval~$(0,T)$ and a spatial domain~$\Omega$,  our analysis suggest that the error in
$L^2\bigr((0,T),L^2(\Omega)\bigr)$-norm is of order $O(k^{2-\frac{\mu}{2}}+h^2)$  (that is, short by order $\frac{\mu}{2}$
 from being optimal in time) where $k$ denotes the maximum time step, and $h$ is the maximum diameter of the
elements of the (quasi-uniform) spatial mesh. However, our numerical experiments indicate  optimal  $O(k^{2}+h^2)$ error bound in the stronger $L^\infty\bigr((0,T),L^2(\Omega)\bigr)$-norm. Variable time steps are used to compensate the  singularity of the continuous solution  near ~$t=0$.
\end{abstract}
\keywords{Fractional diffusion, variable coefficients, discontinuous Galerkin method, convergence analysis}

\section{Introduction}
In this paper,   we investigate a numerical solution that allows a time discontinuity  for  solving time fractional diffusion equations with variable diffusivity. Let $\Omega$ be a bounded convex polygonal domain in~$\R^d$ ($d=1,2,3$),
 with a boundary $\partial \Omega$, and $T > 0$ be a fixed time.
Then the fractional model problem  is given by:
\begin{equation}\label{eq: ivp}
\begin{aligned}
 {\cD}^{\mu} u(x,t)-\nabla \cdot (\A(x,t) \nabla u(x,t)) &= f(x,t) \quad &&{\rm on
}~~\Omega\times (0,T],\\
 u(x,0)&= u_0(x) &&{\rm  on }~~ \Omega,\\
 u(x,t)&= 0  &&{ \rm on
} ~~\partial \Omega\times (0,T],
\end{aligned}
\end{equation}
 where  we assume that $\A \in \C^1([0,T],L^\infty(\Omega))$ and  satisfies
\begin{equation}\label{eq: A positive}
0<a_{\min} < \A (x,t) < a_{\max}<\infty\quad { \rm on
} ~~\overline \Omega\times [0,T].\end{equation}
 Here,  ${\cD}^{\mu}$ is the Caputo's fractional derivative  defined by
\[
{\cD}^{\mu} v(t)=I^{1-\mu}
v'(t):=\int_0^t\omega_{1-\mu}(t-s)\,v'(s)\,ds\quad\text{with}\quad
\omega_{1-\mu}(t):=\frac{t^{-\mu}}{\Gamma(1-\mu)},
\]
where throughout the paper, $0<\mu <1$. Noting that,  $I^{1-\mu}$ is the Riemman Liouville fractional integral operator, and $v'$ denotes the time partial derivative of $v$.

Over the past few decades, researchers have observed numerous biological, physical and financial systems in which some key underlying random motion  conform to a model where the  diffusion is anomalously slow (subdiffusion) and not to the classical model of diffusion. For instance, the fractional diffusion model problem \eqref{eq: ivp} is known to capture well the dynamics of subdiffusion processes, in which the mean square variance grows at a rate slower than that in a Gaussian process, see \cite{Podlubny1999}. Fractional diffusion has been successfully used to describe diffusion in media with fractal geometry \cite{Nigmatulin1986}, highly heterogeneous aquifer \cite{AdamsGelhar1992},
  and underground environmental problem \cite{HatanoHatano1998}.  Two distinct approaches can be used for modelling fractional sub-diffusion. One based on fractional Brownian motion  and Langevin equations \cite{MandelbrotNess1968,Wang1992}, this leads to a diffusion equation with a varying diffusion coefficient exhibiting a fractional power law scaling in time \cite{Wang1992}. The other based on continuous time random walks  and master equations with power law waiting time densities  which leads to a diffusion equation with fractional order temporal derivatives operating on the spatial Laplacian \cite{MetzlerKlafter2000}.

The innovation of this paper is to investigate  the piecewise linear time-stepping discontinuous Galerkin (DG) method   combined with the standard finite elements (FEs) in space  for solving numerically  time fractional models with
  variable diffusion coefficients of the form \eqref{eq: ivp}. Since their inception in the early 1970s, DG methods have found numerous applications \cite{CockburnKarniadakisShu2000}, including for the time discretization of fractional diffusion and fractional wave equations, \cite{MustaphaMcLean2009,MustaphaMcLean2011,MustaphaMcLean2013}.   Their advantages include excellent stability properties  and suitability for adaptive refinement based on a posteriori error estimates \cite{ErikssonJohnson1991} to handle problems with low regularity. The present work is motivated  by an earlier paper \cite{MustaphaMcLean2011}. There in, the first author and McLean considered a piecewise-linear DG method for a fractional diffusion problem with a constant diffusivity:
\begin{equation}
\label{eq: reimann} u'(x,t) - {^{R}{\rm D}}^{1-\mu} \Delta u(x,t)  = f(x,t) \quad\mbox{ for } (x,t)\in \Omega\times (0,T],
\end{equation}
where  $^{R}{\rm D}^{1-\mu} u:=\frac{\partial }{\partial t}( I^\mu u)$ (Riemann--Liouville fractional derivative). Recently, high order $hp$-DG methods with exponential rates of convergence for fractional diffusion \eqref{eq: reimann} and also for fractional wave equations were studied in \cite{Mustapha2015,MustaphaSchoetzau2013}. Noting that, when $\A$ is constant and $f\equiv 0$ in \eqref{eq: ivp}, one may look at  \eqref{eq: reimann} as an alternative representation of \eqref{eq: ivp}.

Numerical solutions for model problems of the form \eqref{eq: ivp} with  constant diffusion parameter  $\A$  have attracted considerable interest in recent years. The case of variable coefficients is indeed very
interesting and also practically important. However, due to the additional difficulty in this case, there are only few papers in the existing literature.   With $\Omega=(0,L)$, Alikhanov \cite{Alikhanov2015} constructed a new difference analog of the time fractional Caputo derivative with the order of approximation $O(k^{3-\mu})$. Difference schemes of order $O(h^q+k^2)$ (with $q\in\{2,4\}$) were proposed and analyzed assuming that $u$ is sufficiently regular, where $k$ is the temporal grid size and $h$ is the spatial grid size. For a time independent diffusivity,  Zhao and Xu \cite{ZhaoXu2014} proposed a compact difference scheme for \eqref{eq: ivp}. Stability and convergence properties of the scheme were  proved. For time fractional convection-diffusion problems, Cui \cite{Cui2015} studied a compact exponential scheme. The stability and the convergence analysis were showed assuming that the coefficients of the model problem are constants. For time independent  coefficients, Saadatmandi et al.  \cite{SaadatmandiDehghanAzizi} investigated the Sinc-Legendre collocation method.

For {\em one-dimensional} spatial domains and constant diffusion parameter  $\A$, Murio \cite{Murio2008} and Zhang et al. \cite{ZhangLiao2014}  studied two classes of  finite difference (FD) methods. Stability properties were  provided. Another  FD scheme in time (with $L1$ approximation for the Caputo fractional derivative) combined with the spatial fourth order compact difference approach was studied  by Ren et al.  \cite{RenZhao2013}. Convergence rates of order $O(k^{1+\mu}+ h^4)$ were proved. Murillo and Yuste \cite{MurilloYuste2013} presented an implicit FD method over non-uniform time steps. An adaptive procedure was described to choose the size of the time meshes.  Lin and Xu \cite{LinXu2007} combined  a  FD scheme  in time and a   spectral method in space. Accuracy of order $ O(k^{1+\mu}+ r^{-m})$ was proved, where $r$ is the spatial polynomial degree, and $m$ is related to the  regularity of the exact solution $u$. Later, Li and Xu  \cite{LiXu2009} developed  and analyzed a time-space spectral method. Zhao and Sun \cite{ZhaoSun2011} combined an order reduction approach and $L1$ discretization of the fractional derivative.
 A box-type scheme was constructed and a convergence rate of  order  $ O(k^{1+\mu} + h^{2})$ had been proved. Finite central differences in time combined with  the FE method in space was studied by Li and Xu \cite{LiXu2010}. For a smooth $u$, a convergence rate of order $O(k^2+h^{\ell+1})$ was achieved where $\ell$ is the degree of the FE solutions in space. Recently, a similar convergence rate was shown by Zeng et al. \cite{ZengTurner2015-2} where the fractional linear multistep method was used for the time discretization. For a high-order local DG  method for space discretization, we refer to the work by  Xu and Zheng \cite{XuZheng2013}.

 For {\em two-} or {\em three-dimensional} spatial domains with $\A=1$ in \eqref{eq: ivp}, Brunner et al. \cite{BrunnerYamamoto2010} used  an algorithm that couples an adaptive time
stepping and adaptive spatial basis selection approach for the numerical solution of \eqref{eq: ivp}. A semi-discrete  piecewise linear Galerkin FE   and lumped mass Galerkin methods were studied  by Jin et al.
\cite{JinLarzarov2013}. An optimal error with respect to the
regularity error estimates was established for $f\equiv 0$ and non-smooth initial data $u_0$. Cui \cite{Cui2013} studied the convergence analysis of compact alternating direction implicit (ADI) schemes for sufficiently smooth solutions of \eqref{eq: ivp}. For {\em three-dimensional} spatial domains,
a fractional ADI scheme was  proposed and analyzed by Chen et al. \cite{ChenLiuLiuChenAnhTurnerBurrage2014}.
 Mustapha et al. \cite{MustaphaAbdallahFurati2014}  proposed {\em low-high} order time stepping discontinuous
Petrov-Galerkin  methods combined with
 FEs  in space.  Using variable time meshes,  $O(k^{m+(1-\mu)/2}+h^{r+1})$ convergence rates were shown, where   $m$ and $r$ are  the degrees of approximate solutions in the  time and spatial variables, respectively. Optimal convergence rates in both variables were demonstrated numerically. In \cite{MustaphaNourCockburn2015}, a hybridizable DG method in space  was extensively studied by Mustapha et al..

The outline of the paper is as follows.  Section~\ref{sec: numerical method} introduces a fully discrete DG FE  scheme. In
Section~\ref{sec: stability}, we prove the stability of the discrete solution and provide a remark about the existence and uniqueness of the numerical solution. Section \ref{sec: projection} is devoted to introduce time and space projection operators that will be used later to show the convergence of the numerical scheme. The error analysis is given in
Section~\ref{sec:error}.   Using suitable refined time-steps (towards $t=0$) and quasi-uniform spatial meshes,  in the $L^2((0,T),L^2(\Omega))$-norm,
convergence of order $O(k^{2-\frac{\mu}{2}}+h^2)$ is achieved. Section~\ref{sec: Numerical} is dedicated to present a sample
of numerical test which illustrate that our error bounds are
pessimistic. For a strongly graded time mesh, in the stronger $L^\infty((0,T),L^2(\Omega))$-norm, we observe optimal convergence rates,  that is, error of order $O(k^{2}+h^2)$.
\section{The numerical method}\label{sec: numerical method}
To describe our fully discrete DG FE method, we introduce a  time partition of the interval $[0,T]$ given by the points:
$0=t_{0}<t_{1}<\cdots<t_{N}=T\,.$ We set $I_n=(t_{n-1},t_n)$ and $k_n=t_n-t_{n-1}$ for $1\le n\le N$ with $k:=\max_{1\le n\le
N}k_n$. Let $S_h\subseteq H_0^1(\Omega)$ denotes the space of continuous, piecewise polynomials
 of total degree $\le 1$  with respect to a quasi-uniform partition of~$\Omega$ into
conforming triangular finite elements, with maximum diameter~$h$.
Next, we introduce our time-space finite dimensional DG FE  space:
\[
 \W=\{w \in L^2((0,T), S_h):~~w|_{I_{n}}\in
\p_1(S_h)~{\rm for}~1\le n\le N\} \]  where $\p_1(S_h)$ denotes the space
of linear polynomials in the time variable $t$,  with coefficients in~$S_h$.
We denote the left-hand limit, right-hand limit and
jump at~$t_n$ by
\[
w^n:=w(t_n)=w(t_n^-),\quad w^n_+:=w(t_n^+),\quad [w]^n:=w^n_+-w^n,
\]
respectively. The weak form of the fractional diffusion equation in~\eqref{eq:
ivp} is
\begin{equation}\label{eq: weak In}
\int_{I_n}\bigl[\iprod{{\cD}^{\mu}  u,v}+a\bigl(t,
u,v\bigr)\bigr]\,dt
    =\int_{I_n}\iprod{f,v}\,dt,\quad \forall ~v\in L^2\bigl(I_n, H^1(\Omega)\bigr)\,.
\end{equation}
 Throughout the paper, $\iprod{\cdot,\cdot}$  denotes the $L^2$-inner product and $\|\cdot\|$ is the associated norm, and $\|\cdot\|_m$ ($m\ge 1$) denotes the norm on the Sobolev space $H^m(\Omega)$.  We use  $\| \cdot\|_{L^q(Y)}$ ($q\ge 1)$ to denote the norm on
$L^q((0,T),Y(\Omega))$ for any Sobolev space $Y(\Omega)$.

For each fixed $t \in (0,T]$, $a(t,\cdot, \cdot):H_0^1 (\Omega)\times H_0^1(\Omega)\rightarrow\mathbb{R}$ 
 is the bilinear form
$$ a(t,v,w)= \iprod{\A(\cdot,t) \nabla v,\nabla w}= \int_{\Omega} \A (x,t)\nabla v(x)\cdot \nabla w(x)\,dx$$
associated with the operator $\nabla \cdot (\A (\cdot,t) \nabla)$ which is symmetric and positive definite (by \eqref{eq: A positive}), that is,
 there exist  positive  constants $c_0$ and $c_1$ such that
\begin{equation}\label{pd}
c_0\| v(t)\|_1^2 \leq |v(t)|_1^2:=a(t,v,v) \leq c_1 \|v(t)\|_1^2\quad \forall ~~v(t) \in H_0^1(\Omega)\,.
\end{equation}
The DG FE approximation $U\in \W$ is
defined as follows: Given $U(t)$ for $0\le t\le t_{n-1}$, the
solution $U\in \p_1(S_h)$ on $I_n$
is determined by requesting that for $1\le n\le N$,
\begin{multline*}
\int_{I_n}\bigl[\iprod{{\cD}_{dg}^{\mu}  U+
    \sum_{j=0}^{n-1}\omega_{1-\mu}(t-t_j)\,[U]^j,X}
    +a\bigl(t, U,X\bigr)\bigr]\,dt
    =\int_{I_n}\iprod{f,X}\,dt,~ \forall~X\in \p_1(S_h),
\end{multline*}
 with $U^0_+=U^0 \in S_h$ is a suitable approximation of the initial  data $u_0$, where
\[ {\cD}^{\mu}_{dg}  U(t):=\sum_{j=1}^n\int_{t_{j-1}}^{\min\{t_j,t\}}\omega_{1-\mu}(t-s)\,U'(s)\,ds\quad {\rm for}~~t\in I_n\,.\]
  Since
 \begin{equation}\label{eq: Dmu U}
 \begin{aligned}
{\rm ^R D}^\mu U(t)&:=  \frac{\partial}{\partial
t}\int_0^t\omega_{1-\mu}(t-s)U(s)\,ds\\
&={\cD}^{\mu}_{dg}U(t)+\omega_{1-\mu}(t)U^0
+\sum_{j=1}^{n-1}\omega_{1-\mu}(t-t_j)\,[U]^j\quad {\rm for}~~t\in I_n,
\end{aligned}
\end{equation}
our scheme can be rewritten in a compact form as follows: for $1\le n\le N$,
\begin{multline}\label{eq: DGFM}
\int_{I_n}\bigl[\bigiprod{{\rm ^R D}^\mu  U,X}
    +a\bigl(t, U,X\bigr)\bigr]\,dt
    =\int_{I_n}\iprod{f+\omega_{1-\mu}(t)U^0,X}\,dt\quad \forall~X\in \p_1(S_h).
\end{multline}
Noting that, since the DG FE scheme \eqref{eq: DGFM} amounts to a square linear system, the existence of the numerical solution $U$ follows from its uniqueness. The uniqueness follows immediately from the above stability property in Theorem \ref{th: stability}.

\section{Stability of the  numerical solution}\label{sec: stability}
 To show the stability of the DG FE scheme \eqref{eq: DGFM}, we claim first the identity: $v(t)=I^{\mu}({\rm ^R D}^\mu v)(t)$ for any $v \in \W.$ If $v$ is an absolutely  continuous function in the time variable, this identity follows by  applying the fractional integral operator $I^\mu$ to both sides of the equality ${\rm ^R D}^\mu v(t)={\cD}^{\mu}v(t)+\omega_{1-\mu}(t)v(0)$   and then   changing the order of integrals and using the identity: $\int_{s}^{\tilde t}
\omega_{1-\mu}(t-s)
\omega_{\mu}(\tilde t -t)\,dt=1.$
\begin{lemma}\label{eq: Ialpha Dalpha identity}If $v\in \W$, then
$$v(t)=I^{\mu}({\rm ^R D}^\mu v)(t)\quad {\rm  for~~}  t \in I_n~~{\rm  with}~~1\le n\le N.$$
\end{lemma}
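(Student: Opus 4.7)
The plan is to reduce the problem for $v \in \W$ (piecewise linear in time with jumps $[v]^j$ at the interior nodes $t_j$) to the absolutely continuous case sketched immediately above the lemma, by exploiting the decomposition of ${\rm ^R D}^\mu v$ already furnished by~\eqref{eq: Dmu U}. Pulling the jumps out explicitly leaves only the ``smooth piece'' ${\cD}^{\mu}_{dg}v$, to which one can apply the standard Fubini-plus-beta-identity argument.

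First I would apply $I^\mu$ term-by-term to~\eqref{eq: Dmu U}, producing three contributions: $I^\mu({\cD}^{\mu}_{dg}v)(t)$, the constant multiple $v^0\,I^\mu[\omega_{1-\mu}](t)$, and $\sum_{j=1}^{n-1}[v]^j\,I^\mu[\omega_{1-\mu}(\cdot - t_j)\,\mathbf{1}_{(t_j,T)}](t)$. For the smooth piece, I would observe that for $t \in I_n$,
\[
{\cD}^{\mu}_{dg}v(t) = \int_0^t \omega_{1-\mu}(t-s)\,\dot v(s)\,ds,
\]
where $\dot v$ is the a.e.\ derivative of $v$ on the union of the open subintervals $I_j$; then Fubini plus the beta-function identity quoted before the lemma collapses the resulting double integral to $\int_0^t \dot v(s)\,ds$. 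The same identity applied to each weight $\omega_{1-\mu}(\cdot - t_j)$ gives an $I^\mu$-image equal to $1$ on $(t_j,T)$, so the initial value and the jumps survive unchanged as $v^0 + \sum_{j=1}^{n-1}[v]^j$.

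Then I would close the argument by a jump-aware fundamental theorem of calculus: since $v \in \W$ is piecewise linear with initial value $v^0$ and jumps $[v]^j$ at the nodes $t_j$, one has $v(t) = v^0 + \int_0^t \dot v(s)\,ds + \sum_{j=1}^{n-1}[v]^j$ for $t \in I_n$, which combined with the three contributions above yields $I^\mu({\rm ^R D}^\mu v)(t) = v(t)$, as required.

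The only real obstacle is bookkeeping: the jump contributions produced by $I^\mu$ acting on the weight functions $\omega_{1-\mu}(\cdot - t_j)$ must cancel precisely the boundary terms that are ``missing'' from $\int_0^t \dot v(s)\,ds$ because $v$ is not globally absolutely continuous. A direct Fubini attack on $I^\mu({\rm ^R D}^\mu v)$ without first appealing to~\eqref{eq: Dmu U} would force an integration by parts at each node $t_j$ and reproduce these boundary terms the painful way; the utility of~\eqref{eq: Dmu U} is that it prearranges the accounting so that the cancellation is immediate.
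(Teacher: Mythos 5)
Your proposal is correct and follows essentially the same route as the paper: both start from the decomposition \eqref{eq: Dmu U}, apply $I^{\mu}$ term by term, use Fubini together with the beta identity $\int_{s}^{\tilde t}\omega_{1-\mu}(t-s)\omega_{\mu}(\tilde t-t)\,dt=1$ to reduce $I^{\mu}({\cD}^{\mu}_{dg}v)$ to $\int_0^t \dot v\,ds$ and each jump weight to the constant $1$, and then telescope via the jump-aware fundamental theorem of calculus. The only cosmetic difference is that the paper carries out the bookkeeping by splitting the integrals over subintervals and interchanging the order of summation explicitly, and it writes the initial term as $v^0_+$ (the only meaningful value at $t=0$ for $v\in\W$) where you write $v^0$.
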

\begin{proof}
Since $v$ has possible discontinuities at the time nodes $t_0,t_1,\cdots, t_{j-1}$, from \eqref{eq: Dmu U},
\begin{equation}\label{eq:formula for Dalpha}
{\rm ^R D}^\mu v(s)=\omega_{1-\mu}(s)v^0_+
+\sum_{i=1}^{j-1}\omega_{1-\mu}(s-t_i)\,[v]^i+{\cD}^{\mu}_{dg} v(s) ~~{\rm for}~~s \in I_j\,.
\end{equation}
Applying the operator $I^{\mu}$ to both sides and using      $I^{\mu}({\cD}^{\mu} v)(t)=\int_0^t v'(s)\,ds$, we observe
\begin{multline*}
I^{\mu}({\rm ^R D}^\mu v)(t)=v^0_++\sum_{j=2}^{n-1}\int_{I_j}
\omega_{\mu}(t-s)
\sum_{i=1}^{j-1}\omega_{1-\mu}(s-t_i)\,[v]^i\,ds\\+\int_{t_{n-1}}^{t}
\omega_{\mu}(t-s)
\sum_{i=1}^{n-1}\omega_{1-\mu}(s-t_i)\,[v]^i\,ds+\int_0^t v'(s)ds~~{\rm for}~~t \in I_n\,.
\end{multline*}
Now, changing the order of summations and rearranging the
terms yield
\begin{multline*}
I^{\mu}({\rm ^R D}^\mu v)(t)=v^0_++\sum_{i=1}^{n-2}\sum_{j=i+1}^{n-1}\int_{I_j}
\omega_{\mu}(t-s)
\omega_{1-\mu}(s-t_i)\,[v]^i\,ds\\+\sum_{i=1}^{n-1}\int_{t_{n-1}}^{t}
\omega_{\mu}(t-s)
\omega_{1-\mu}(s-t_i)\,[v]^i\,ds+\sum_{j=1}^{n}\int_{t_{j-1}}^{\min
\{t,t_j\}} v'(s)ds \\=v^0_++\sum_{i=1}^{n-2}\int_{t_{i}}^{t}
\omega_{\mu}(t-s)
\omega_{1-\mu}(s-t_i)\,[v]^i\,ds\\+\int_{t_{n-1}}^{t}
\omega_{\mu}(t-s)
\omega_{1-\mu}(s-t_{n-1})\,[v]^{n-1}\,ds+\sum_{j=1}^{n}\int_{t_{j-1}}^{\min
\{t,t_j\}} v'(s)ds\,.
\end{multline*}
Integrating and simplifying, then we have
\begin{align*}
I^{\mu}({\rm ^R D}^\mu v)(t)&=v^0_++\sum_{i=1}^{n-1}\,[v]^i+\sum_{j=1}^{n-1}\,(v^j-v^{j-1}_+)+v(t)-v^{n-1}_+=v(t)~~{\rm for}~~t \in I_n\,.\quad \Box
\end{align*}
\end{proof}

In the next lemma we state some important properties of the Riemman Liouville factional  operators. These properties will be used to show the stability of the numerical scheme, as well as, in our  error analysis in the forthcoming section.

For $\ell \in \{0,1\}$, we let $ \C^\ell(J_n,L^2(\Omega))$ ($J_n:=\cup_{j=1}^n I_j$) denote the space of functions $v: J_n \rightarrow L^2(\Omega)$ such that
the restriction $v|_{I_j}$ extends to an $\ell$-times continuously differentiable function on the
closed interval $\overline I_j$ for $1 \le j \le  n.$ For later use,  we let
\[\|v\|_{I_j}:=\sup_{t\in I_j}\|v(t)\|\quad{\rm and}\quad \|v\|_{J_n}:=\max_{j=1}^n \|v\|_{I_j},\]
where we drop $n$ when $J_n=J_N$.
\begin{lemma}
\label{lemma:positivity-equal-0}
For  $1 \le n \le N$  and for  $0<\alpha<1,$ we have

 {\rm (i)} {The~  operator $^{R}{\rm D}^\alpha$ satisfies: for $v \in  \C^1(J_n,L^2(\Omega))$},
$$\int_0^{t_n} \iprod{^{R}{\rm D}^\alpha v,v}\,dt \ge \frac{(\pi \alpha)^\alpha}{(1+\alpha)^{1+\alpha}}\cos\Big( \frac{\alpha\pi}{2}\Big) t_n^{-\alpha} \int_0^{t_n}\|v(t)\|^2\,dt\,.$$

{\rm (ii)} {The integral operator  $I^{\alpha}$ satisfies: for $\, v,\,w \in \C^0(J_n,L^2(\Omega))$}
$$ \left|\int_0^{t_n}\iprod{I^\alpha v,w}\,dt\right|^2
    \le \sec^2\Big( \frac{\alpha\pi}{2}\Big)\,\int_0^{t_n}\iprod{I^\alpha v,v}\,dt \int_0^{t_n}\iprod{I^\alpha w,w}\,dt\,.
$$
\end{lemma}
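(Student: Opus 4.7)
The plan is to use Fourier analysis on all of $\R$ after extending $v$ (and $w$) by zero outside $(0,t_n)$. Write $\tilde v$ for the zero extension (an $L^2(\Omega)$-valued function of $t\in\R$) and $\hat{\tilde v}(\xi,\cdot)$ for its temporal Fourier transform. The causal Riemann--Liouville kernel $\omega_\alpha(t)\mathbf{1}_{\{t>0\}}$ has Fourier symbol $(i\xi)^{-\alpha}$, so $I^\alpha$ and ${}^R D^\alpha$ act as Fourier multipliers with symbols $(i\xi)^{-\alpha}$ and $(i\xi)^\alpha$, respectively. The sole quantitative input will be the sectorial identities
\[
|(i\xi)^{\pm\alpha}|=|\xi|^{\pm\alpha},\qquad \mathrm{Re}\,(i\xi)^{\pm\alpha}=|\xi|^{\pm\alpha}\cos(\alpha\pi/2)\quad(\xi\in\R),
\]
so the ratio of modulus to real part of each symbol is exactly $\sec(\alpha\pi/2)$; this is already the constant appearing in (ii).

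For (ii), Parseval yields $\int_0^{t_n}\iprod{I^\alpha v,w}\,dt = (2\pi)^{-1}\!\int_\R (i\xi)^{-\alpha}\iprod{\hat{\tilde v},\hat{\tilde w}}\,d\xi$. I would bound the modulus by $|\xi|^{-\alpha}$ and apply Cauchy--Schwarz in the weighted measure $|\xi|^{-\alpha}\,d\xi$, reducing matters to identifying $\int|\xi|^{-\alpha}\|\hat{\tilde v}\|^2\,d\xi$ (the real part of the same Parseval representation in the diagonal case $w=v$) with $(2\pi/\cos(\alpha\pi/2))\int_0^{t_n}\iprod{I^\alpha v,v}\,dt$, and similarly for $w$. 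The $\sec^2(\alpha\pi/2)$ factor then drops out.

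For (i), the analogous Parseval identity is
\[
\int_0^{t_n}\iprod{{}^R D^\alpha v,v}\,dt \;=\; \frac{\cos(\alpha\pi/2)}{2\pi}\int_\R |\xi|^\alpha\,\|\hat{\tilde v}(\xi,\cdot)\|^2\,d\xi.
\]
To bound this below by a multiple of $t_n^{-\alpha}\int_0^{t_n}\|v\|^2\,dt$ I would exploit the compact temporal support of $\tilde v$: split the Fourier integral at $|\xi|=R$, use $|\xi|^\alpha\ge R^\alpha$ on the outer part, and control the inner part by $\int_{|\xi|<R}\|\hat{\tilde v}\|^2\,d\xi \le 2Rt_n\int_0^{t_n}\|v\|^2\,dt$ via the pointwise estimate $\|\hat{\tilde v}(\xi,\cdot)\|_{L^2(\Omega)}\le\|\tilde v\|_{L^1((0,t_n),L^2(\Omega))}\le\sqrt{t_n}\,\|v\|_{L^2 L^2}$. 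Plancherel then produces the lower bound $R^\alpha(2\pi-2Rt_n)\,\|v\|^2_{L^2 L^2}$; optimizing in $R$ at $R=\pi\alpha/((1+\alpha)t_n)$ recovers precisely the sharp prefactor $(\pi\alpha)^\alpha/(1+\alpha)^{1+\alpha}$.

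The main obstacle is justifying the Fourier manipulations when $v\in\C^1(J_n,L^2(\Omega))$ has jump discontinuities at the interior nodes. This is handled by interpreting ${}^R D^\alpha\tilde v = \partial_t(\omega_{1-\alpha}*\tilde v)$ as a tempered distribution on $\R$, checking that its restriction to each $I_j$ coincides with the explicit pointwise formula \eqref{eq:formula for Dalpha} from the proof of Lemma~\ref{eq: Ialpha Dalpha identity}, and noting that $\tilde v\in L^2(\R,L^2(\Omega))$ so that Plancherel and all duality pairings above are fully legitimate.
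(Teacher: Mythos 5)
The paper does not actually prove this lemma itself: it simply cites \cite{McLean2012} (Theorem A.1) for (i) and \cite{MustaphaSchoetzau2013} (Lemma 3.1) for (ii), and those references argue exactly as you do --- Plancherel together with the sectorial identities for $(i\xi)^{\pm\alpha}$, a weighted Cauchy--Schwarz for (ii), and for (i) the splitting at $|\xi|=R$ with the optimal choice $R=\pi\alpha/((1+\alpha)t_n)$, which indeed reproduces the stated constant $(\pi\alpha)^{\alpha}(1+\alpha)^{-(1+\alpha)}\cos(\alpha\pi/2)\,t_n^{-\alpha}$. Your reconstruction is correct; the one point needing slightly more care than your closing remark concedes is that ${}^{R}{\rm D}^{\alpha}\tilde v$ need not belong to $L^2(\R,L^2(\Omega))$ when $\alpha\ge 1/2$ (the jumps make $\hat{\tilde v}$ decay only like $|\xi|^{-1}$), so the Parseval identity in (i) should be justified by truncation or an $L^p$--$L^{p'}$ duality rather than plain Plancherel applied to both factors.
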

\begin{proof}
The property (i)  was proven in \cite[Theorem A.1]{McLean2012} by using the Laplace transform and Plancherel Theorem.
 For the proof of the property (ii),  see   \cite[Lemma 3.1]{MustaphaSchoetzau2013}. $\quad \Box$
\end{proof}

The next theorem shows the stability of the  DG FE scheme.
\begin{theorem}\label{th: stability} Assume that $U^0 \in L^2(\Omega)$ and $f \in L^2((0,T),L^2(\Omega))$. Then,
\[
\int_0^T\|U\|_1^2\,dt
    \leq C T^{1-\mu}\|U^0\|^2
     +C\,\int_0^{T}\|f\|^2\,dt\,.
\]
\end{theorem}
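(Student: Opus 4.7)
The plan is to test the compact scheme \eqref{eq: DGFM} with $X = U$ on each $I_n$ and sum over $n = 1,\dots,N$ to reach the global identity
\[
\int_0^T \bigiprod{{\rm ^R D}^\mu U, U}\,dt + \int_0^T a(t, U, U)\,dt = \int_0^T \bigiprod{f + \omega_{1-\mu}(t) U^0, U}\,dt.
\]
On the left the first integrand is non-negative by Lemma \ref{lemma:positivity-equal-0}(i), while the second term is bounded below by $c_0 \int_0^T \|U\|_1^2\,dt$ via \eqref{pd}. So the task reduces to bounding the right-hand side from above in a way that permits absorption.

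Next I would use Lemma \ref{eq: Ialpha Dalpha identity} to rewrite $U = I^\mu({\rm ^R D}^\mu U)$ and then apply Lemma \ref{lemma:positivity-equal-0}(ii) with $v = {\rm ^R D}^\mu U$ (so that $I^\mu v = U$) and $w = f + \omega_{1-\mu}(t) U^0$. This yields
\[
\left|\int_0^T \bigiprod{w, U}\,dt\right| \le \sec(\tfrac{\mu\pi}{2})\Big(\int_0^T \bigiprod{U,{\rm ^R D}^\mu U}\,dt\Big)^{1/2}\Big(\int_0^T \bigiprod{I^\mu w, w}\,dt\Big)^{1/2},
\]
and Young's inequality splits this into a small multiple of $\int_0^T \bigiprod{U,{\rm ^R D}^\mu U}\,dt$, which is absorbed into the positivity term on the left, plus $C\int_0^T \bigiprod{I^\mu w, w}\,dt$.

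To control $\int_0^T \bigiprod{I^\mu w, w}\,dt$ I would expand $w$ and exploit the semigroup identity $I^\mu(\omega_{1-\mu} U^0) = U^0$, which follows from $\omega_\mu \ast \omega_{1-\mu} = \omega_1 \equiv 1$. The $U^0$--$U^0$ contribution then evaluates explicitly to $\int_0^T \omega_{1-\mu}(t)\|U^0\|^2\,dt = T^{1-\mu}\|U^0\|^2/\Gamma(2-\mu)$, producing the $CT^{1-\mu}\|U^0\|^2$ summand in the final bound. The $\bigiprod{I^\mu f, f}$ piece is handled by the $L^2\bigl((0,T),L^2(\Omega)\bigr)\to L^2\bigl((0,T),L^2(\Omega)\bigr)$ boundedness of $I^\mu$, giving $C\int_0^T\|f\|^2\,dt$, and the mixed $f$--$U^0$ terms are controlled by Cauchy--Schwarz in the same manner. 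Dividing through by $c_0$ then delivers the stated estimate.

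The main obstacle is that Lemma \ref{lemma:positivity-equal-0}(ii) as stated assumes $\C^0$ regularity in time, whereas $w = \omega_{1-\mu}(t)U^0$ is singular at $t=0$. I would either justify a density extension (both integrals in the lemma remain finite for this $w$) or sidestep it by splitting the right-hand side: handling $\bigiprod{f, U}$ directly by Cauchy--Schwarz combined with the Poincar\'e inequality and Young's inequality to absorb into the $c_0\int_0^T\|U\|_1^2\,dt$ coercivity term, and reserving Lemma \ref{lemma:positivity-equal-0}(ii) for the $\omega_{1-\mu}U^0$ piece, where the two factors come out explicitly thanks to the identity $I^\mu(\omega_{1-\mu}U^0)=U^0$.
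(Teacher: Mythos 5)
Your proposal is correct and follows essentially the same route as the paper: test \eqref{eq: DGFM} with $X=U$, use coercivity \eqref{pd} and the positivity of ${\rm ^R D}^\mu$ from Lemma \ref{lemma:positivity-equal-0}(i), and treat the $\omega_{1-\mu}(t)U^0$ term via the identity $U=I^\mu({\rm ^R D}^\mu U)$ of Lemma \ref{eq: Ialpha Dalpha identity} together with Lemma \ref{lemma:positivity-equal-0}(ii) and $I^\mu\omega_{1-\mu}=1$. The ``fallback'' split you describe in your last paragraph (bounding $\iprod{f,U}$ directly by Cauchy--Schwarz and Young and absorbing it into $c_0\int_0^T\|U\|_1^2\,dt$, reserving the continuity lemma for the initial-data term) is precisely what the paper does, so adopt that version as the main argument.
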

\begin{proof} Choosing $ X=U$ in the DG FE scheme \eqref{eq: DGFM},
 and then summing over $n$, we obtain
 \[
\int_0^{T}\bigl[\bigiprod{{\rm ^R D}^\mu  U,U}
    +a(t,U,U)\bigr]\,dt
    =\int_0^{T} \iprod{ f+\omega_{1-\mu}(t) U^0,U}\,dt.
\]
Since $a(\cdot,U,U)\ge c_0\|U\|_1^2$ by \eqref{pd} and $\iprod{f,U}\le \frac{1}{2c_0}\|f\|^2+\frac{c_0}{2}\|U\|^2$, we have
\[
\int_0^{T}\bigl[\bigiprod{{\rm ^R D}^\mu  U,U}
    +\frac{c_0}{2}\|U\|_1^2\bigr]\,dt
    \le \int_0^{T} \Big(\iprod{\omega_{1-\mu}(t) U^0,U}+\frac{1}{2c_0}\|f\|^2\Big)\,dt\,.
\]
Using the identity   $U(t)=I^{\mu}({\rm ^R D}^\mu U)(t)$ from Lemma  \ref{eq: Ialpha Dalpha identity},  Lemma
\ref{lemma:positivity-equal-0} (ii),  the inequality $ab\le \frac{a^2}{4}+b^2$, and the identity $I^{\mu} \omega_{1-\mu}(t)=1$,  yield
\begin{multline}\label{eq: U0 RL}
 \int_0^{T} \iprod{\omega_{1-\mu}(t) U^0,U}\,dt=\int_0^{T} \iprod{ \omega_{1-\mu}(t) U^0,I^{\mu}({\rm ^R D}^\mu U)}\,dt
    \\ \leq \frac{1}{4}\int_0^{T}\bigiprod{{\rm ^R D}^\mu  U,U}\,dt+\sec^2(\mu
\pi/2)
   \int_0^{T}\omega_{1-\mu}(t)(I^{\mu} \omega_{1-\mu})(t)\,dt\|U^0\|^2 \\ \leq \frac{1}{4}\int_0^{T}\bigiprod{{\rm ^R D}^\mu  U,U}\,dt+C\,T^{1-\mu}\|U^0\|^2\,.
\end{multline}
 To complete the proof, we combine the above two equations and use the positivity property of the operator ${\rm ^R D}^\mu$ given by Lemma
\ref{lemma:positivity-equal-0} (i).$\quad \Box$
\end{proof}
\section{Projections and errors}\label{sec: projection}
In this section, we introduce time and space projections, and then  derive some bounds
and errors properties that will be used later in our convergence analysis.
\subsection{Projection in space}
For each $t\in [0,T]$, the elliptic  projection operator $\,R_h:H^1_0(\Omega) \to S_h$ is defined by
 \begin{equation}\label{eq: Ritz projection} a(t,R_h v-v, \chi)=0\quad \forall~~ \chi\in S_h\,.\end{equation}
By the assumption  $\A \in \C^1([0,T],L^\infty(\Omega)),$ for each $t\in (0,T)$, the projection error $\xi:=R_h u -u$ has the well-known approximation property:
\begin{equation}\label{eq:estimate ritz projector 1} \|  \xi(t)\|+h\|\nabla \xi(t)\|\leq C\,
h^2\|u(t)\|_2\quad {\rm for}~~u(t)\in H^2(\Omega) \cap  H^1_0(\Omega)\,.
\end{equation}
Moreover, we have
\begin{equation}\label{eq:estimate ritz projector 2}  \|  \xi'(t)\|\leq C\,h^2(\|u(t)\|_2+\|u'(t)\|_2)\quad {\rm for}~~u(t),\,u'(t) \in H^2(\Omega) \cap  H^1_0(\Omega)\,.
\end{equation}
 To show \eqref{eq:estimate ritz projector 2}, we decompose
$\xi'$ as: $\xi'=g_h +(R_h u'-u')$  where for each $t\in (0,T)$, $g_h(t)=(R_h u)'(t)-R_h u'(t)\in S_h$. Since $\|(R_h u'-u')(t)\|\le C\,h^2\|u'(t)\|_2$ by the approximation property \eqref{eq:estimate ritz projector 1} applied to $u'$,
 it remains to derive a similar bound for $g_h$.  To do so, we use the Nitsche’s trick: for each $t\in (0,T)$, there is $\phi \in H^2(\Omega)$  such that
\[-\nabla \cdot (\A(t) \nabla \phi)=g_h(t)\quad{\rm in}~~\Omega,\quad \phi=0~~{\rm on}~~\partial \Omega\]
with $\|\phi\|_2\le C\|\nabla \cdot (\A(t) \nabla \phi)\|$ which holds for convex polyhedral domains. Taking the inner product with
 $g_h(t) \in S_h$, and then using the orthogonality
property of $R_h$,
\[\|g_h(t)\|^2=a(t,\phi, g_h(t))=a(t,R_h\phi, g_h(t))=a(t,g_h(t),R_h\phi)\,.\]
But, by  the definition of $R_h$, for each $t\in (0,T)$,
\begin{equation}\label{eq: g_h}
\begin{aligned}
a(t, g_h(t), \chi)&=a(t, (R_h u)' (t), \chi)-a(t, u' (t), \chi)\\
&=\frac{d}{dt}a(t, \xi (t), \chi)-\iprod{\A'(t) \nabla \xi(t),\nabla \chi}\\
&=-\iprod{\A'(t)\nabla \xi(t),\nabla \chi}\quad \forall~~ \chi\in S_h\,.\end{aligned}
\end{equation}
Therefore, \begin{multline*}
\|g_h(t)\|^2=
-\iprod{\A'(t)\nabla \xi(t),\nabla R_h \phi}
=\iprod{\A'(t)\nabla \xi(t),\nabla (\phi-R_h \phi)}+\iprod{ \xi(t),\nabla\cdot(\A'(t)\nabla \phi)}\,.\end{multline*}
Finally, using the Cauchy-Schwarz inequality, the approximation property in \eqref{eq:estimate ritz projector 1}, and the inequality $\|\phi\|_2\le C\|\nabla \cdot (\A(t) \nabla \phi)\|$,  we observe
\[\|g_h(t)\|^2\le C\|\nabla (\phi-R_h \phi)\|\,\|\nabla \xi(t)\|+C\|\xi(t)\|\,\|\phi\|_2\le C\,h^2\|g_h(t)\|\,\|u(t)\|_2\,.\]
The proof of \eqref{eq:estimate ritz projector 2} is completed now.

\subsection{Projection in time}
The local $L^2$-projection operator $\pw : L^2( I_{n},L^2(\Omega)) \to
\mathcal{C}( I_{n},\p_1(L^2(\Omega))$  defined by:
 \[\int_{I_n} \iprod{\pw v-v,\,w}\,dt=0~~\forall~~w \in \p_1(L^2(\Omega))\quad{\rm for}~~1\le n\le N,\]
where  $\p_{1}(L^2(\Omega))$ is the space
of linear polynomials in the time variable $t$,  with coefficients in~$L^2(\Omega)$\,. Explicitly,
\[
\pw v(t)=\frac{12}{k_n^3}(t-t_{n-\frac{1}{2}})\int_{I_n}(s-t_{n-\frac{1}{2}})\,v(s)\,ds+\frac{1}{k_n}\int_{I_n}v(s)\,ds
    \quad\text{for $t\in I_n$\,},
\]
where $t_{n-\frac{1}{2}}:=(t_{n-1}+t_n)/2.$ Hence, for $v'\in L^1( I_{n},L^2(\Omega))$,
\[
(\pw v)'(t)=\frac{12}{k_n^3}\int_{I_n}(s-t_{n-\frac{1}{2}})\,v(s)\,ds=\frac{6}{k_n^3}\int_{I_n}(t_n-t)(s-t_{n-1})\,v'(s)\,ds\,.
\]
Thus,   for $1\le n\le N$, we have
\begin{equation}\label{(ii)}
\|\pw v(t)\|  \leq \frac{4}{k_n}\int_{I_n}\|v(s)\|\,ds\quad{\rm and}\quad  \|(\pw v)'(t)\| \leq \frac{3}{2k_n}\int_{I_n}\|v'(s)\|\,ds\,.
\end{equation}
Setting $\eta_v=\pw v-v$, we have the well-known projection error bound: for $t\in I_n$,
\begin{equation}\label{projection error}
\|\eta_v(t)\|+k_n\,\|\eta_v'(t)\|\le C\, k_n^{\ell-1}\int_{I_n}\Big\|\frac{\partial^\ell v}{\partial t^\ell} (s)\Big\|\,ds\quad{\rm for}~~\ell=1,\,2.
\end{equation}
Next, we show an error bound property of $\pw$ that involves the  operator ${\rm ^R D}^\mu.$
\begin{lemma}\label{lem: estimat of Dalpha eta} Let  $\frac{\partial^\ell v}{\partial t^\ell} \in L^1((0,t_n),L^2(\Omega))$ for $\ell\in \{1,\,2\}$.  We have
\begin{align*}
\int_{I_n} \bigiprod{{\rm ^R D}^\mu  \eta_v,\eta_v}\,dt &\leq
   C\,k_n^{1-\mu} \max_{j=1}^{n} k_j^{2\ell-2}\Big(\int_{I_j}\Big\|\frac{\partial^\ell v}{\partial t^\ell} \Big\|\, dt\Big)^2\quad{\rm for}~~1\le n\le N.
\end{align*}
 \end{lemma}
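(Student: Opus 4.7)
Set $M:=\max_{1\le j\le n}k_j^{\ell-1}\int_{I_j}\|\partial_t^\ell v(s)\|\,ds$, so that the desired bound reads $\int_{I_n}\bigiprod{{\rm ^R D}^\mu\eta_v,\eta_v}\,dt\le Ck_n^{1-\mu}M^2$. I would first Cauchy--Schwarz in the $L^\infty_t$--$L^1_t$ fashion,
\[
\int_{I_n}\bigiprod{{\rm ^R D}^\mu\eta_v,\eta_v}\,dt\le \|\eta_v\|_{I_n}\int_{I_n}\|{\rm ^R D}^\mu\eta_v(t)\|\,dt,
\]
and bound the first factor by $CM$ using \eqref{projection error}. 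It then suffices to establish $\int_{I_n}\|{\rm ^R D}^\mu\eta_v(t)\|\,dt\le Ck_n^{1-\mu}M$, which I would achieve by splitting ${\rm ^R D}^\mu\eta_v(t)$ on $I_n$ via \eqref{eq:formula for Dalpha} into the local Caputo--DG piece $\cD^\mu_{dg}\eta_v(t)$, the jump contributions $\omega_{1-\mu}(t-t_i)[\eta_v]^i$ for $1\le i\le n-1$, and the initial contribution $\omega_{1-\mu}(t)\eta_v^0_+$.

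The local piece is elementary: the bound $\|\cD^\mu_{dg}\eta_v(t)\|\le\omega_{2-\mu}(t-t_{n-1})\|\eta_v'\|_{I_n}$ together with $\|\eta_v'\|_{I_n}\le Ck_n^{\ell-2}\int_{I_n}\|\partial_t^\ell v\|$ from \eqref{projection error} integrates on $I_n$ to $Ck_n^{\ell-\mu}\int_{I_n}\|\partial_t^\ell v\|\le Ck_n^{1-\mu}M$, which is already of the claimed order. For the history and initial terms I would exploit the defining $L^2$-orthogonality $\int_{I_n}\eta_v(t)L(t)\,dt=0$ of $\pw$ against every scalar linear polynomial $L$: for each $i$ with $t_i\le t_{n-2}$ (so the kernel is smooth on $\overline{I_n}$), I would take $L$ as the first-order Taylor polynomial of $\omega_{1-\mu}(\cdot-t_i)$ about $t_{n-1/2}$; the resulting second-order remainder is bounded by $Ck_n^2(t_{n-1}-t_i)^{-2-\mu}$ via $|\partial_t^2\omega_{1-\mu}(\cdot-t_i)|=|\omega_{-1-\mu}(\cdot-t_i)|\le C(t_{n-1}-t_i)^{-2-\mu}$, and paired with the jump estimate $\|[\eta_v]^i\|\le\|\eta_v\|_{I_i}+\|\eta_v\|_{I_{i+1}}\le CM$ it produces a contribution of size $Ck_n^3(t_{n-1}-t_i)^{-2-\mu}M^2$. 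For the residual singular-kernel indices (namely $i=n-1$, and the initial term when $n=1$) I would instead apply Cauchy--Schwarz directly, noting $\int_{I_n}\omega_{1-\mu}(t-t_{n-1})\,dt=k_n^{1-\mu}/\Gamma(2-\mu)$, to obtain a contribution of size $Ck_n^{1-\mu}M^2$.

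The remaining and main obstacle is consolidating the sum of history contributions into a single $Ck_n^{1-\mu}M^2$ bound, which reduces to verifying $\sum_{i:t_i\le t_{n-2}}k_n^{2+\mu}(t_{n-1}-t_i)^{-2-\mu}\le C$. The dominant index $i=n-2$ contributes $(k_n/k_{n-1})^{2+\mu}$, which is controlled by a mild local mesh regularity of the form $k_n\lesssim k_{n-1}$ that the graded time meshes of the paper satisfy; after rescaling by $k_{n-1}$, the remaining terms resemble a convergent series comparable to $\sum_{j\ge 2}j^{-2-\mu}$ and hence sum to a constant. Combining the local, history and initial estimates then yields the claimed $Ck_n^{1-\mu}M^2$ bound on $\int_{I_n}\bigiprod{{\rm ^R D}^\mu \eta_v,\eta_v}\,dt$.
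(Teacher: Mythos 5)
Your plan takes a genuinely different route from the paper's, but as written it has a gap in the very first decomposition. By the definition in Section~\ref{sec: numerical method}, for $t\in I_n$ the term ${\cD}^{\mu}_{dg}\eta_v(t)=\sum_{j=1}^n\int_{t_{j-1}}^{\min\{t_j,t\}}\omega_{1-\mu}(t-s)\,\eta_v'(s)\,ds$ is \emph{not} local: besides the piece $\int_{t_{n-1}}^{t}\omega_{1-\mu}(t-s)\eta_v'(s)\,ds$ that your bound $\omega_{2-\mu}(t-t_{n-1})\|\eta_v'\|_{I_n}$ controls, it contains the history integrals $g_j(t):=\int_{I_j}\omega_{1-\mu}(t-s)\eta_v'(s)\,ds$ for $j\le n-1$, which appear neither among your jump terms nor your initial term. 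These are not negligible: estimating $\int_{I_n}\iprod{g_j,\eta_v}\,dt$ crudely via $\int_{I_n}\omega_{1-\mu}(t-s)\,dt\le\omega_{2-\mu}(k_n)$ gives $Ck_n^{1-\mu}M^2$ \emph{per interval} $I_j$, hence $C(n-1)k_n^{1-\mu}M^2$ after summing --- an $\ell^1$ rather than $\ell^\infty$ accumulation over the history, overshooting the target by a factor of order $n$. They can be rescued by exactly the orthogonality/Taylor device you apply to the jumps (for $j\le n-2$ the map $t\mapsto g_j(t)$ is smooth on $\overline I_n$, while $j=n-1$ admits your singular Cauchy--Schwarz treatment), but this must be done; as stated, the claim that ``the local piece is elementary'' is false for $n\ge2$. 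Two further issues: once you pass to $\int_{I_n}\|{\rm ^R D}^\mu\eta_v\|\,dt$ by the sup--$L^1$ Cauchy--Schwarz, the orthogonality of $\eta_v$ on $I_n$ can no longer act (the integrand is a nonnegative scalar), so that reduction sentence is inconsistent with your subsequent termwise estimates of $\int_{I_n}\iprod{\cdot,\eta_v}\,dt$ (which correctly carry $M^2$) and should be dropped; and your consolidation step $\sum_{i}k_n^{2+\mu}(t_{n-1}-t_i)^{-2-\mu}\le C$ requires mesh hypotheses ($k_n\lesssim k_{n-1}$ plus a summability condition) that the lemma does not assume and that you only sketch.

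For contrast, the paper's proof avoids all of this with a single integration by parts in time: $\int_{I_n}\bigiprod{{\rm ^R D}^\mu\eta_v,\eta_v}\,dt=\iprod{\I^n(t_n),\eta_v(t_n)}-\int_{I_n}\iprod{\I^n,\eta_v'}\,dt$ with $\I^n(t):=I^{1-\mu}\eta_v(t)-I^{1-\mu}\eta_v(t_{n-1})$, and the increment $\I^n$ is bounded by $2\,\omega_{2-\mu}(k_n)\,\|\eta_v\|_{J_n}$ using only the monotonicity and positivity of the kernel; the projection error \eqref{projection error} then yields the stated bound with no orthogonality, no Taylor expansion of the kernel, and no mesh restriction. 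If you wish to keep your decomposition, you must add the treatment of the history integrals of $\eta_v'$ and state the mesh assumptions explicitly; otherwise the integration-by-parts argument is both shorter and more general.
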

\begin{proof} We integrate by parts and  notice that
\begin{equation}\begin{aligned}
\int_{I_n}\bigiprod{{\rm ^R D}^\mu  \eta_v,\eta_v}\,dt
&= \iprod{I^{{1-\mu}}\eta_v(t), \eta_v(t)}\bigg|_{t_{n-1}^+}^{t_n^-} -  \int_{I_n}\iprod{I^{{1-\mu}}\eta_v, \eta_v'}\,dt\\
&=   \iprod{\I^n(t_n), \eta_v(t_n)}- \int_{I_n}\iprod{\I^n(t), \eta_v'(t)}\,dt,
\end{aligned}
\end{equation}
where for $t\in I_n$,
 \begin{align*}\I^n(t)&:=I^{{1-\mu}}\eta_v(t)-I^{{1-\mu}}\eta_v(t_{n-1})\\
 &=\int_0^{t_{n-1}}
    [\omega_{{1-\mu}}(t-s)-\omega_{{1-\mu}}(t_{n-1}-s)] \eta_v(s)\,ds+  \int_{t_{n-1}}^t
    \omega_{{1-\mu}}(t-s)\eta_v(s)\,ds\,.\end{align*}
Simplifying then integrating,    we observe
 \begin{align*} \|\I^n(t)\|
 &\le
   \Big(\int_0^{t_{n-1}}
    [\omega_{{1-\mu}}(t_{n-1}-s)-\omega_{{1-\mu}}(t-s)] \,ds+  \int_{t_{n-1}}^t
    \omega_{{1-\mu}}(t-s)\,ds\Big) \|\eta_v\|_{J_n}\\
    &\le    2\,\omega_{2-\mu}(k_n) \, \| \eta_v\|_{J_n}\quad{\rm for}~~t\in I_n\,.\end{align*}
Therefore, an application of the Cauchy-Schwarz inequality gives
 \begin{align*}
\int_{I_n}|\bigiprod{{\rm ^R D}^\mu  \eta_v,\eta_v}|\,dt    &\le    2\,\omega_{2-\mu}(k_n)\,\| \eta_v\|_{J_n}\left(\| \eta_v(t_n)\|+\int_{I_n}\| \eta_v'\|\,dt\right),
\end{align*}
and hence,  using the error projection  in \eqref{projection error}, we obtain the  desired bound.
$\quad \Box$
\end{proof}

Since $^R{\rm D}^{\mu} v(t)=\omega_{1-\mu}(t)v(0)+I^{{1-\mu}} v'(t)$ and since $\|\eta_v\|_{I_n}\le 5\|v\|_{I_n}$ by the triangle inequality and the first inequality in \eqref{(ii)}, we have
\[\int_{I_n}
|\bigiprod{^R{\rm D}^{\mu} v,\eta_v}|\,dt\le
5\|v\|_{I_n}\int_{I_n}
(\omega_{1-\mu}(t)\|v(0)\|+\|I^{{1-\mu}} v'\|)\,dt.\]
Summing over $n$ gives
\[
\int_0^T
|\bigiprod{^R{\rm D}^{\mu} v,\eta_v}|\,dt\le
5\,\|v\|_J(\omega_{2-\mu}(T)\|v(0)\|+\int_0^T\omega_{2-\mu}(T-s)\| v'(s)\|\,ds)\,.\]
On the other hand, noting that
\[\int_0^T
\bigiprod{^R{\rm D}^{\mu} \pw v,v}\,dt=
\iprod{I^{{1-\mu}} \pw v(T),v(T)}-\int_0^T
\iprod{I^{{1-\mu}} \pw v,v'}\,dt,\]
and hence, by the Cauchy-Schwarz inequality and the first inequality in \eqref{(ii)},
\[
\begin{aligned}\Big|\int_0^T
\bigiprod{^R{\rm D}^{\mu} \pw v,v}\,dt\Big|&\le
\|\pw v\|_{J}\int_0^T\Big[\omega_{1-\mu}(T-t)\|v(T)\|+
\int_0^t\omega_{1-\mu}(t-s)\,ds\|v'(t)\|\Big]dt\\
&\le
4\|v\|_{J}\Big(\|v(T)\|\omega_{2-\mu}(T)+\int_0^{T}\omega_{2-\mu}(t)\|v'(t)\|\,dt\Big)\,.\end{aligned}
\]
We combine the above two inequalities and use that $\|v\|_J\le \|v(0)\|+\int_0^T\|v'\|\,dt$, we obtain the bound below that will be used to show the convergence of our scheme,
 \begin{equation}\label{eq: bound 2}
   \Big|\int_0^T
\bigiprod{^R{\rm D}^{\mu} v,\eta_v}\,dt\Big|+
\Big|\int_0^T
\bigiprod{^R{\rm D}^{\mu} \pw v,v}\,dt\Big| \le C\,T^{1-\mu}\,\Big(\|v(T)\|+\int_0^T\|v'\|\,dt\Big)^2\,.\end{equation}
\section{Error estimates}\label{sec:error}
This section is devoted to investigate the convergence of the DG FE scheme, \eqref{eq: DGFM}. We  decompose the error  as follows:
\begin{equation}\label{eq: U-u=theta+eta}
U-u=\zeta+\pw \xi+\eta_u\quad {\rm with}~~\zeta=U-\pw R_hu\,.\end{equation}
Recall that $\xi=R_h u-u$ and $\eta_u=\pw u-u$. The main task now is
to estimate~$\zeta$. 
\begin{theorem}\label{thm: estimate of theta} Choose $U^0=R_h u_0$. For $1\le n\le N,$ we have
\[
\begin{aligned}
   \|\zeta\|_{L^2(H^1)}^2
&\le C\Big(h^4C_1(k,u)+
 C_2(k,u)+h^2k^2\|u\|_{L^2(H^2)}^2\Big),
\end{aligned}\]
where
\begin{equation}\label{eq: C1C2}
\begin{aligned}
C_1(k,u)&=\max_{n=1}^{N}
 \Big(k_n^{-\frac{\mu}{2}}\int_{I_n}\| u'\|_2\, dt\Big)^2+\Big(\|u_0\|_2+\|u'\|_{L^1(H^2)}\Big)^2,\\
 C_2(k,u)&=\max_{n=1}^N \,k_n^{2\ell-2-\mu}\Big(k_n^{-\mu}\Big(\int_{I_n}\Big\|\frac{\partial^\ell u}{\partial t^\ell} \Big\|\,dt\Big)^2+\Big(\int_{I_n}\Big\|\nabla\frac{\partial^\ell u}{\partial t^\ell} \Big\|\,dt\Big)^2\Big)\,.\end{aligned}
 \end{equation}
     \end{theorem}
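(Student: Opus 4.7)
The approach is to derive an error equation for $\zeta$, test with $X = \zeta$, bound the left-hand side from below by coercivity \eqref{pd} and positivity (Lemma~\ref{lemma:positivity-equal-0}(i)), and estimate the four resulting right-hand side contributions. The exact solution satisfies \eqref{eq: DGFM} with $U^0$ replaced by $u_0$ (obtained by testing the PDE and using ${\rm ^R D}^\mu u = \cD^\mu u + \omega_{1-\mu}(t) u_0$), so subtracting from the DG equation, inserting the decomposition \eqref{eq: U-u=theta+eta} with $U^0 = R_h u_0$ (so that $U^0 - u_0 = \xi(0)$), and summing over $n$ produces
\begin{multline*}
\int_0^T\bigl[\bigiprod{{\rm ^R D}^\mu \zeta, \zeta} + a(t,\zeta,\zeta)\bigr]dt = \int_0^T \omega_{1-\mu}(t)\bigiprod{\xi(0), \zeta}\,dt \\
- \int_0^T\bigl[\bigiprod{{\rm ^R D}^\mu(\pw\xi + \eta_u), \zeta} + a(t, \pw\xi + \eta_u, \zeta)\bigr]dt.
\end{multline*}

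The key simplifications use two orthogonality properties. The Ritz identity $a(t, \xi(t), \chi) = 0$ for $\chi \in S_h$ converts $a(t, \pw\xi, \zeta)$ into a perturbation driven by the $t$-variation of $\A$: writing $\A(\cdot, t) = \A(\cdot, t_{n-1}) + [\A(\cdot, t) - \A(\cdot, t_{n-1})]$ on $I_n$, the frozen part is absorbed by $\pw$-orthogonality against linear-in-$t$ test functions, and the Ritz identity cancels the leading contribution, leaving a remainder bounded by $Ck_n\int_{I_n}\|\nabla\xi\|\|\nabla\zeta\|\,dt$; combined with $\|\nabla\xi\| \le Ch\|u\|_2$ and Cauchy--Schwarz this yields exactly the $h^2 k^2 \|u\|_{L^2(H^2)}^2$ term. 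In parallel, the time-projection orthogonality $\int_{I_n}\iprod{\eta_u,\chi}\,dt = 0$ for $\chi \in \p_1(L^2(\Omega))$, together with the commutation of $\pw$ and $\nabla_x$, makes the $\A$-frozen part of $a(t, \eta_u, \zeta)$ vanish; the remaining $Ck_n\int_{I_n}\|\nabla\eta_u\|\|\nabla\zeta\|\,dt$ is bounded via \eqref{projection error} to give the $(\int\|\nabla\partial_t^\ell u\|)^2$ piece of $C_2$.

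The initial term is bounded exactly as in \eqref{eq: U0 RL}: applying $\zeta = I^\mu({\rm ^R D}^\mu \zeta)$ from Lemma~\ref{eq: Ialpha Dalpha identity}, Lemma~\ref{lemma:positivity-equal-0}(ii), and $ab \le a^2/4 + b^2$ produces $\tfrac14\int_0^T\bigiprod{{\rm ^R D}^\mu \zeta, \zeta}\,dt + CT^{1-\mu}h^4\|u_0\|_2^2$. The remaining fractional terms $\int_0^T\bigiprod{{\rm ^R D}^\mu\pw\xi,\zeta}\,dt$ and $\int_0^T\bigiprod{{\rm ^R D}^\mu\eta_u,\zeta}\,dt$ are treated by the same strategy: applying Lemma~\ref{lemma:positivity-equal-0}(ii) after writing $\zeta = I^\mu({\rm ^R D}^\mu\zeta)$, and using the identity $v = I^\mu({\rm ^R D}^\mu v)$ (extending Lemma~\ref{eq: Ialpha Dalpha identity}) for $v = \pw\xi$ and $v = \eta_u$, produces an $\epsilon\int\bigiprod{{\rm ^R D}^\mu\zeta,\zeta}\,dt$ term (absorbed on the left) plus the quantities $\int_0^T\bigiprod{{\rm ^R D}^\mu\pw\xi,\pw\xi}\,dt$ and $\int_0^T\bigiprod{{\rm ^R D}^\mu\eta_u,\eta_u}\,dt$. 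The first of these, after decomposing $\pw\xi = \xi + \eta_\xi$, is controlled by \eqref{eq: bound 2} applied to $v = \xi$ plus Lemma~\ref{lem: estimat of Dalpha eta} with $v = \xi$; combined with the Ritz estimates \eqref{eq:estimate ritz projector 1}--\eqref{eq:estimate ritz projector 2} this yields the full $h^4 C_1$ contribution. The second is controlled directly by Lemma~\ref{lem: estimat of Dalpha eta} with $v = u$, producing the remaining $k_n^{-\mu}(\int\|\partial_t^\ell u\|)^2$ piece of $C_2$. The main obstacle is the fractional-derivative book-keeping in this last step: one must verify that $v = I^\mu({\rm ^R D}^\mu v)$ extends to the non-$\W$ functions $\pw\xi$ and $\eta_u$, and then carefully track the $k_n$-weights emerging from Lemma~\ref{lem: estimat of Dalpha eta} and \eqref{eq: bound 2} so that they aggregate into exactly $C_1(k,u)$ and $C_2(k,u)$ without losing the $h^2$ factor on $\xi$.
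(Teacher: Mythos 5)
Your proposal is correct and follows essentially the same route as the paper's own proof: the same error equation tested with $\zeta$, the same coefficient-freezing on each $I_n$ combined with the Ritz and $\pw$-orthogonalities to reduce the bilinear-form terms to $O(k_n)$ perturbations of $\nabla\xi$ and $\nabla\eta_u$, Lemma~\ref{lemma:positivity-equal-0}(ii) together with $v=I^{\mu}({\rm ^R D}^\mu v)$ to absorb the fractional cross terms, and the same splitting of $\int_0^T\bigiprod{{\rm ^R D}^{\mu}\pw\xi,\pw\xi}\,dt$ handled via \eqref{eq: bound 2} and Lemma~\ref{lem: estimat of Dalpha eta}. The minor deviations (freezing $\A$ at $t_{n-1}$ rather than $t_n$, treating the $\pw\xi$ and $\eta_u$ bilinear terms separately) are immaterial.
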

\begin{proof} We start our proof by taking the inner product of \eqref{eq: ivp} with $\zeta$, using the identity ${\cD}^{\mu} u(t)={^R}{{\rm D}}^{\mu}u(t)-\omega_{1-\mu}(t)u_0$, and then integrating over the time subinterval $I_n$,
\[
\int_{I_n}\bigl[\bigiprod{{\rm ^R D}^\mu  u,\zeta}
    +a\bigl(t, u,\zeta\bigr)\bigr]\,dt
    =\int_{I_n}\iprod{f+\omega_{1-\mu}(t)u_0,\zeta}\,dt\,.\]
The above equation, the DG FE scheme \eqref{eq: DGFM} and the decomposition in \eqref{eq: U-u=theta+eta} imply
\begin{multline}\label{eq: intermdiate equation 1}
\int_0^T\left(\bigiprod{^R{\rm D}^{\mu} \zeta ,\zeta}+|\zeta|_1^2\right)dt=\int_0^T\iprod{\omega_{1-\mu}(t)\,\xi(0),\zeta}\,dt\\
-\int_0^T\left[\bigiprod{^R{\rm D}^{\mu}
(\pw\xi+\eta_u), \zeta}+a(t, \pw\xi+\eta_u , \zeta)\right] dt\,. \end{multline}
Now, using  the continuity property in  Lemma \ref{lemma:positivity-equal-0} $(ii)$,  we notice that
\[
\begin{split}
\Big|\int_0^T\bigiprod{^R{\rm D}^{\mu} \eta_u,\zeta}\,dt\Big| &\le C\int_0^T \bigiprod{^R{\rm D}^{\mu}
\eta_u,\eta_u}\,dt+\frac{1}{4}\int_0^T\bigiprod{^R{\rm D}^{\mu} \zeta,\zeta}\,dt,\\
 \Big| \int_0^T\bigiprod{^R{\rm D}^{\mu} \pw \xi,\zeta}\,dt\Big| &\le
C\int_0^T \bigiprod{^R{\rm D}^{\mu} \pw \xi,\pw \xi}\,dt+\frac{1}{4}\int_0^T\bigiprod{^R{\rm D}^{\mu}
\zeta,\zeta}\,dt\,.\end{split}\]
In addition, following the steps in \eqref{eq: U0 RL}, we observe
\[
 \int_0^{T} \iprod{\omega_{1-\mu}(t)\, \xi(0),\zeta}\,dt
  \leq \frac{1}{4}\int_0^{T}\bigiprod{{\rm ^R D}^\mu  \zeta,\zeta}\,dt+C\,T^{1-\mu}\|\xi(0)\|^2
\]
 Inserting the above three inequalities  in \eqref{eq: intermdiate equation 1}, then simplifying, and using  the positivity property of  ${\rm ^R D}^\mu$, Lemma \ref{lemma:positivity-equal-0} (i), yield
\begin{multline}\label{eq: intermdiate equation 2}
\int_0^T|\zeta|_1^2dt\le C\,T^{1-\mu}\|\xi(0)\|^2
+C\int_0^T\left(\bigiprod{^R{\rm D}^{\mu} \eta_u,\eta_u}+
\bigiprod{^R{\rm D}^{\mu} \pw\xi,\pw\xi}\right)dt\\+\sum_{n=1}^N \Big|\int_{I_n} a(t, \pw\xi+\eta_u , \zeta) dt\Big|\,. \end{multline}
From the definitions of the time projection $\pw$ and the space projection $R_h$,
\begin{multline*}
\int_{I_n}\iprod{\A(t_n)\nabla (\pw\xi+\eta_u), \nabla \zeta}\,dt
=\int_{I_n}\iprod{\A(t_n)\nabla \xi , \nabla \zeta}\,dt
=\int_{I_n}\iprod{[\A(t_n)-\A(t)]\nabla \xi , \nabla \zeta}\,dt
\end{multline*}
and so,
\begin{align*}
\Big|\int_{I_n}a(t, \pw\xi+&\eta_u , \zeta)dt\Big|\\
&=\Big|\int_{I_n}\iprod{\A(t_n)\nabla (\pw\xi+\eta_u) +[\A(t)-\A(t_n)]\nabla (\pw\xi+\eta_u) , \nabla \zeta}\,dt\Big|\\
&=\Big|\int_{I_n}\iprod{[\A(t)-\A(t_n)]\nabla (\eta_\xi+\eta_u) , \nabla \zeta}\,dt\Big|\\
&\le Ck_n \int_{I_n}\|\nabla (\eta_\xi+\eta_u)\|\,\| \nabla \zeta \|\,dt\,. \end{align*}
Thus, by the inequality  $\|\nabla \eta_\xi(t)\|\le \|\nabla \xi(t)\|+4k_n^{-1}\int_{I_n}\|\nabla \xi(s)\|\,ds$ (follows from the triangle inequality and the first property of $\Pi_k$ in \eqref{(ii)})  for $t\in I_n$, and property \eqref{pd},
\[
\Big|\int_{I_n}a(t, \pw\xi+\eta_u , \zeta)dt\Big|
\le C\,k_n^2\int_{I_n}(\|\nabla \xi\|^2+\|\nabla\eta_u\|^2)\,dt+\frac{1}{2}\int_{I_n} |\zeta|_1^2\,dt
\,.\]
Inserting this in \eqref{eq: intermdiate equation 2} and using  \eqref{eq:estimate ritz projector 1} for $t=0$,   we get
\begin{multline*}\int_0^T|\zeta|_1^2dt\le C\,h^4\|u_0\|_2^2 \\
+C\sum_{n=1}^N\int_{I_n}\Big(\bigiprod{^R{\rm D}^{\mu} \eta_u,\eta_u}+
\bigiprod{^R{\rm D}^{\mu} \pw\xi,\pw\xi}+ k_n^2(\|\nabla \xi\|^2+\|\nabla\eta_u\|^2)\Big)\,dt\,.\end{multline*}
But, for $t\in I_n$ and for $\ell \in \{1,\,2\}$,
\begin{align*}
\int_{I_n}\bigiprod{^R{\rm D}^{\mu} \eta_u,\eta_u}\,dt&\le C k_n\max_{j=1}^{n} k_j^{2\ell-2-\mu}\Big(\int_{I_j}\Big\|\frac{\partial^\ell u}{\partial t^\ell} \Big\|\, dt\Big)^2\quad {\rm by~Lemma}~ \ref{lem: estimat of Dalpha eta},\\
\|\nabla \xi(t)\|&\le C\,h\|u(t)\|_2\quad  {\rm by~the~elliptic~projection~error}~ \eqref{eq:estimate ritz projector 1},\\
\|\nabla\eta_u(t)\|&\le C\, k_n^{\ell-1}\int_{I_n} \Big\|\nabla \frac{\partial^\ell  u}{\partial t^\ell} \Big\|\,ds \quad {\rm by~the~time~projection~error}~ \eqref{projection error},
 \end{align*}
where in the first inequality we also used the non-increasing time step assumption. So,
\begin{equation}\label{eq:orthogonality2}
\begin{aligned}
  \int_0^T|\zeta|_1^2dt
&\le  C\,h^4\|u_0\|_2^2+C\int_0^{T}
\bigiprod{^R{\rm D}^{\mu} \pw\xi,\pw\xi}dt+C\,h^2k^2\int_0^T\|u\|_2^2dt\\
& +C\max_{n=1}^N \,k_n^{2\ell-2-\mu}\Big(\Big(\int_{I_n}\Big\|\frac{\partial^\ell u}{\partial t^\ell} \Big\|\,dt\Big)^2+k_n^{\mu}\Big(\int_{I_n}\Big\|\nabla \frac{\partial^\ell u}{\partial t^\ell} \Big\|\,dt\Big)^2\Big)\,.
\end{aligned}
\end{equation}
It remains to estimate $\int_0^T
\bigiprod{^R{\rm D}^{\mu} \pw\xi,\pw\xi}\,dt$. From the decomposition:
\begin{equation}\label{eq: decomposition pw xi}
\int_{I_n}
\bigiprod{^R{\rm D}^{\mu} \pw\xi,\pw\xi}\,dt=
\int_{I_n}
\left[\bigiprod{^R{\rm D}^{\mu} \eta_\xi,\eta_\xi}+
\bigiprod{^R{\rm D}^{\mu} \xi,\eta_\xi}+
\bigiprod{^R{\rm D}^{\mu} \pw\xi,\xi}\right]\,dt\,.\end{equation}
By  Lemma \ref{lem: estimat of Dalpha eta},
\[
\int_{I_n} \bigiprod{{\rm ^R D}^\mu  \eta_\xi,\eta_\xi}\,dt \leq
C\,k_n^{1-\mu}\max_{j=1}^{n}
 \Big(\int_{I_j}\| \xi'\|\, dt\Big)^2\le C\,k_n\max_{j=1}^{n}
 \Big(k_j^{-\frac{\mu}{2}}\int_{I_j}\| \xi'\|\, dt\Big)^2\,.\]
Inserting the above bound in \eqref{eq: decomposition pw xi}, then summing over $n$ and  using the achieved bound in  \eqref{eq: bound 2}, we obtain
\begin{equation}\label{eq: estimate of pi xi}
\int_0^T
|\bigiprod{^R{\rm D}^{\mu} \pw\xi,\pw\xi}|\,dt\le
C\max_{n=1}^{N}
 \Big(k_n^{-\frac{\mu}{2}}\int_{I_n}\| \xi'\|\, dt\Big)^2+C\Big(\|\xi(0)\|+\int_0^T\|\xi'\|\,dt\Big)^2.\end{equation}
Finally, to complete the proof, we combine \eqref{eq:orthogonality2} and \eqref{eq: estimate of pi xi}. $\quad \Box$
\end{proof}

In the next theorem we show our main convergence results of the   DG FE solution.
Typically,
the exact solution $u$ of problem \eqref{eq: ivp} satisfies the finite regularity assumptions:
\begin{equation} \label{eq:countable-regularity v1}
\| u'(t)\|_2+t\| u''(t)\|_1\le {\bf M}\,\, t^{\sigma-1}\quad {\rm for}~~~t>0,
\end{equation}
for some positive constants ${\bf M}$  and $\sigma$. Due to the singular behaviour $u$ near $t=0$,  we employ a family of non-uniform meshes, where the
time-steps are graded towards $t=0$; see~\cite{McLeanMustapha2007,MustaphaMcLean2011}. More
precisely, for a fixed parameter $\gamma\ge1$,  we assume that
\begin{equation}\label{eq: tn standard}
t_n=(n/N)^\gamma T\quad\text{for $0\le n \le N$.}
\end{equation}
One can easily see that the sequence of  time-step sizes $\{k_j\}_{j=1}^N$ is nondecreasing, that is, $k_i\le k_j$ for $1\le i\le j\le N$. One can also show the following mesh property:
\begin{equation}\label{eq: time mesh property}
k_j\le \gamma k t_j^{1-1/\gamma}\,.\end{equation}

\begin{theorem}\label{thm:error-bound-abstract}
Let $u$ be the solution of (\ref{eq: ivp}) satisfying the regularity property \eqref{eq:countable-regularity v1} with $\sigma>\mu/2$. Let $U$ be the DG FE solution defined by~\eqref{eq: DGFM}. Then, we have
\[
\int_0^T \|U-u\|^2 \,dt \le C\,(h^4+
    k^{\gamma(2\sigma-\mu)})\quad{\rm for}\quad
  1\le \gamma \le   \frac{4-\mu}{2\sigma-\mu}\,
\] where $C$ is a constant that depends on $T$, $\mu$,  $\gamma$, $\sigma$, and on ${\bf M}$.
\end{theorem}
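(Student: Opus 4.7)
The plan is to start from the decomposition $U-u=\zeta+\pw\xi+\eta_u$ of~\eqref{eq: U-u=theta+eta} and bound the $L^2((0,T),L^2(\Omega))$-norm of each of the three summands separately. Since $\zeta\in\W\subset L^2((0,T),H_0^1(\Omega))$, Poincar\'e's inequality gives $\|\zeta\|_{L^2(L^2)}^2\le C\|\zeta\|_{L^2(H^1)}^2$, so Theorem~\ref{thm: estimate of theta} reduces the task to estimating $h^4C_1(k,u)$, $C_2(k,u)$ and $h^2k^2\|u\|_{L^2(H^2)}^2$. For $\pw\xi$, the $L^\infty_t$-in-$L^1_t$ stability in~\eqref{(ii)} followed by Cauchy--Schwarz in time yields $\|\pw\xi\|_{L^2(L^2)}^2\le C\|\xi\|_{L^2(L^2)}^2\le Ch^4\|u\|_{L^2(H^2)}^2$ via~\eqref{eq:estimate ritz projector 1}. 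For $\eta_u$ the projection error~\eqref{projection error} provides the pointwise bound that, squared and integrated over $I_n$, gives $\int_{I_n}\|\eta_u\|^2\,dt\le Ck_n^{2\ell-1}\bigl(\int_{I_n}\|\partial^\ell u/\partial t^\ell\|\,dt\bigr)^2$ for $\ell\in\{1,2\}$ to be chosen interval-by-interval.

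Next I will convert these abstract bounds into concrete rates using the regularity assumption~\eqref{eq:countable-regularity v1} together with the graded-mesh inequality $k_n\le\gamma k\,t_n^{1-1/\gamma}$ from~\eqref{eq: time mesh property}. For intervals with $n\ge2$ I take $\ell=2$: by the mean value theorem applied to $s\mapsto s^{\sigma-1}/(\sigma-1)$, both $\int_{I_n}\|u''\|\,dt$ and $\int_{I_n}\|\nabla u''\|\,dt$ are at most $CM\,k_n\,t_n^{\sigma-2}$. The dominant contribution to $C_2$ then becomes $k_n^{4-\mu}t_n^{2\sigma-4}$, and substituting the mesh inequality shows that the exponent of $t_n$ equals $2\sigma-\mu-(4-\mu)/\gamma$, which is non-positive precisely under the hypothesis $\gamma\le(4-\mu)/(2\sigma-\mu)$; hence the maximum over $n\ge2$ is bounded by $C\,k^{\gamma(2\sigma-\mu)}$. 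The analogous but easier computation shows that $C_1$ is bounded by a constant independent of $k$ and $h$, so $h^4C_1\le Ch^4$, while $h^2k^2\|u\|_{L^2(H^2)}^2\le C(h^4+k^{\gamma(2\sigma-\mu)})$ follows from AM--GM once one uses $\|u(t)\|_2\le\|u_0\|_2+\int_0^tMs^{\sigma-1}\,ds\le C$ and $\gamma(2\sigma-\mu)\le 4-\mu<4$. A similar summation argument bounds $\|\eta_u\|_{L^2(L^2)}^2$ by $C\,k^{\gamma(2\sigma-\mu)}$.

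The hard part will be handling the first interval $I_1$: when $\sigma<1$ the integral $\int_0^{t_1}\|u''\|\,ds$ is infinite, so the $\ell=2$ bound is unavailable there. My remedy is to isolate $I_1$ and apply the $\ell=1$ projection estimate on that interval: since $\int_0^{t_1}\|u'\|_2\,ds\le(M/\sigma)t_1^\sigma$ and $k_1=t_1=T/N^\gamma$, every first-interval contribution (to $C_2$, to $\int_{I_1}\|\eta_u\|^2\,dt$, and to the Riemann--Liouville terms via Lemma~\ref{lem: estimat of Dalpha eta}) scales like $t_1^{2\sigma-\mu}\sim k^{\gamma(2\sigma-\mu)}$, matching the target rate. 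Because the hybrid choice ``$\ell=1$ on $I_1$, $\ell=2$ on $I_n$ for $n\ge2$'' is not literally captured by the single-$\ell$ form of~\eqref{eq: C1C2}, strictly one must revisit the derivation of Theorem~\ref{thm: estimate of theta} so that Lemma~\ref{lem: estimat of Dalpha eta} is invoked with the appropriate $\ell$ on each subinterval. Once this technicality is settled, the three pieces combine to give the claimed bound $\int_0^T\|U-u\|^2\,dt\le C(h^4+k^{\gamma(2\sigma-\mu)})$.
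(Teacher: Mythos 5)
Your proposal follows essentially the same route as the paper: the decomposition $U-u=\zeta+\pw\xi+\eta_u$, Theorem~\ref{thm: estimate of theta} for $\zeta$, the stability of $\pw$ with \eqref{eq:estimate ritz projector 1} and \eqref{projection error} for the other two pieces, and then the regularity assumption \eqref{eq:countable-regularity v1} combined with the mesh property \eqref{eq: time mesh property}, taking $\ell=2$ on $I_n$ for $n\ge2$ and $\ell=1$ on $I_1$ to avoid the non-integrable $\|u''\|$ near $t=0$ --- which is exactly what the paper does (it too applies Lemma~\ref{lem: estimat of Dalpha eta} interval-by-interval with different $\ell$, the technicality you correctly flag). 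The argument is correct and matches the paper's proof.
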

\begin{proof}
From the decomposition of the error  in (\ref{eq: U-u=theta+eta}), the triangle inequality, the  bound in Theorem \ref{thm: estimate of theta}, the inequality $\|\pw \xi\|_{L^2(L^2)}\le \|\xi\|_{L^2(L^2)}$ by \eqref{(ii)}, the elliptic projection error
\eqref{eq:estimate ritz projector 1},  the error from the time projection  \eqref{projection error}, we have
\[
\int_0^T\|U-u\|^2\,dt \le C\Big(h^4C_1(k,u)+
 C_2(k,u)
+h^2(h^2+k^2)\|u\|_{L^2(H^2)}^2\Big)\,.
\]
By the definitions of $C_1(k,u)$ and $C_2(k,u)$ in \eqref{eq: C1C2}, the  regularity assumption \eqref{eq:countable-regularity v1}, and the inequality $h^2k^2\le \frac{1}{2}(h^4+k^4)$,  we observe
\[
\begin{aligned}
\int_0^T\|U-u\|^2\,dt
&\le Ch^4\max_{n=1}^{N}
 \Big(k_n^{-\frac{\mu}{2}}\int_{I_n}t^{\sigma-1}\, dt\Big)^2+Ch^4\Big(1+\int_0^Tt^{\sigma-1}\,dt\Big)^2\\
& +C\,k_1^{-\mu}\left(\int_{I_1}t^{\sigma-1}\,dt\right)^2+C\max_{n=2}^N \,k_n^{2-\mu}\left(\int_{I_n}t^{\sigma-2}\,dt\right)^2+C\,h^2k^2\\
&\le C(h^4\max_{n=1}^{N}
 k_n^{2\sigma-\mu}+h^4 +k_1^{2\sigma-\mu}+\max_{n=2}^N \,k_n^{4-\mu}t_n^{2\sigma-4}+k^4)\\
&\le C\,(h^4+k^{\min\{\gamma(2\sigma-\mu),4-\mu\} })
\end{aligned}\]
where in the last inequality,  by the mesh property \eqref{eq: time mesh property},  we used
\begin{align*}
k_n^{4-\mu}{t_n}^{2(\sigma-2)}&\le C\,k^{4-\mu} t_n^{2(\sigma-2)+4 -\mu-(4-\mu)/\gamma}
       \le C\,k^{\min\{\gamma(2\sigma-\mu),4-\mu\}}.\quad \Box\end{align*}
\end{proof}

\section{Numerical results}\label{sec: Numerical}
We present a sample of numerical tests using a model problem in one space dimension, of the form~\eqref{eq: ivp} with $\Omega=(0,1)$,  $[0,T]=[0,1],$ and $\A(x,t)=1+t^{3/2}$. We choose $u_0(x)=\sin(\pi x)$ for the initial data and choose the source term $f$
so that
\begin{equation}\label{eq:num ex3}
u(t)=  (1+t^{1-\mu}) \sin(\pi x)\,.
\end{equation}
 One easily verifies that the regularity
condition \eqref{eq:countable-regularity v1}  holds for $\sigma=1-\mu$.

The numerical  tests below  reveal faster rates of convergence than those suggested by Theorem \ref{thm:error-bound-abstract}, and that our regularity assumptions are more restrictive than is needed in practice. More precisely, Theorem \ref{thm:error-bound-abstract} shows  suboptimal (in time) convergence of order $O(k^{2-\frac{\mu}{2}}+h^2)$ for sufficiently graded time meshes in the time-space $L^2$-norm.  However, we demonstrate numerically optimal (in both time and space) rates of convergence in the stronger $L^\infty(L^2)$-norm. To this end,  We introduce a finer mesh
\begin{equation}\label{eq: fine grid}
\G^{m}=\{\,t_{j-1}+\ell k_j/m:\text{$j=1$, 2, \dots, $N$ and
    $\ell=0$, 1, \dots, $m$}\,\},
\end{equation}
and define the discrete maximum norm~$\|v\|_{\G^m}=\max_{t\in\G^{m}}\|v(t)\|$, so that, for sufficiently large values of~$m$, $\|U_h-u\|_{\G^m}$ approximates the uniform error~$\|U_h-u\|_{L^\infty(L^2)}$. In all tables, we choose $m=10.$

For the numerical illustration of the convergence rates in time, we choose  $M$ (the number of uniform spatial subintervals) to be sufficiently large such that  the spatial error is negligible compared to the error from the time discretization. We employ a time mesh of the form~\eqref{eq: tn standard}. Tables \ref{tab: mu=0.3}, \ref{tab: mu=0.5} and \ref{tab: mu=2/3 and 0.7} show the error (in the stronger $L^\infty(L^2)$) and the rates of convergence  when $\mu=0.3$, $0.5$, $2/3$ ~and $0.7$ respectively, for various choices of $N$~and $\gamma$.  We observe optimal rates of order $O(k^{\gamma\sigma})$ for various choices of $1\le \gamma \le   \frac{2}{\sigma}$  which is faster than the rate $O(k^{\frac{\gamma}{2}(2\sigma-\mu)})$ for $1\le \gamma \le   \frac{4-\mu}{2\sigma-\mu}$  predicted by our theory in Theorem \ref{thm:error-bound-abstract}. Noting that, in   Table  \ref{tab: mu=2/3 and 0.7},   $\sigma\le \mu$ and thus the assumption $\sigma >\mu/2$ in this theorem is not sharp.
\begin{table}
\renewcommand{\arraystretch}{1}
\begin{center}
\caption{Errors and time convergence rates with $\mu=0.3$ for various choices of  $\gamma$.} \label{tab: mu=0.3}
\begin{tabular}{|r|rr|rr|rr|}
\hline {$N$}&\multicolumn{2}{c|}{$\gamma=1$} &\multicolumn{2}{c|}{$\gamma=2$}
&\multicolumn{2}{c|}{$\gamma=3$}\\
\hline
   10& 5.8997e-03&        & 1.1252e-03&       & 9.9332e-04&\\
   20& 3.5981e-03& 0.71339& 4.1163e-04& 1.4507& 2.5524e-04& 1.9604\\
   40& 2.1827e-03& 0.72111& 1.5008e-04& 1.4556& 6.4530e-05& 1.9838\\
   80& 1.3208e-03& 0.72468& 5.4700e-05& 1.4562& 1.6137e-05& 1.9996\\
  160& 7.9804e-04& 0.72692& 1.9995e-05& 1.4519& 4.0085e-06& 2.0092\\
  320& 4.8168e-04& 0.72840& 7.3478e-06& 1.4443& 9.9164e-07& 2.0152\\
\hline
\end{tabular}
\end{center}
\end{table}

\begin{table}
\renewcommand{\arraystretch}{1}
\begin{center}
\caption{Errors and time convergence rates with $\mu=0.5$ for various choices of $\gamma$.}  \label{tab: mu=0.5}
\begin{tabular}{|r|rr|rr|rr|rr|}
\hline {$N$}&\multicolumn{2}{c|}{$\gamma=1$} &\multicolumn{2}{c|}{$\gamma=2$}
&\multicolumn{2}{c|}{$\gamma=3$}&\multicolumn{2}{c|}{$\gamma=4$}\\
\hline
   10& 1.149e-02&      & 3.262e-03&      & 1.560e-03&      & 1.882e-03&     \\
   20& 7.641e-03& 0.589& 1.619e-03& 1.011& 5.972e-04& 1.385& 4.869e-04&   1.951\\
   40& 5.151e-03& 0.569& 8.037e-04& 1.010& 2.192e-04& 1.446& 1.209e-04&   2.009\\
   80& 3.641e-03& 0.500& 3.997e-04& 1.008& 7.867e-05& 1.478& 2.933e-05&   2.044\\
  160& 2.570e-03& 0.503& 1.992e-04& 1.005& 2.797e-05& 1.492& 7.011e-06&   2.064\\
  320& 1.812e-03& 0.504& 9.940e-05& 1.003& 9.908e-06& 1.497& 1.774e-06&   1.982\\
  \hline
\end{tabular}
\end{center}
\end{table}

\begin{table}
\renewcommand{\arraystretch}{1}
\begin{center}
\caption{Errors and time convergence rates   for various choices of $\gamma$.}  \label{tab: mu=2/3 and 0.7}
\begin{tabular}{|r|rr|rr|rr|rr|}
\hline
\multicolumn{9}{c}{$\mu=2/3$}\\
\hline {$N$}&\multicolumn{2}{c|}{$\gamma=1$} &\multicolumn{2}{c|}{$\gamma=2$}
&\multicolumn{2}{c|}{$\gamma=4$}&\multicolumn{2}{c|}{$\gamma=6$}\\
\hline
 10& 1.677e-02&      & 7.579e-03&      & 3.416e-03&      & 3.261e-03&      \\
 20& 1.327e-02& 0.338& 4.677e-03& 0.696& 1.393e-03& 1.294& 9.087e-04& 1.843\\
 40& 1.044e-02& 0.346& 3.036e-03& 0.623& 5.553e-04& 1.327& 2.471e-04& 1.879\\
 80& 8.191e-03& 0.350& 1.940e-03& 0.646& 2.205e-04& 1.332& 6.435e-05& 1.941\\
160& 6.427e-03& 0.350& 1.229e-03& 0.658& 8.753e-05& 1.333& 1.643e-05& 1.970\\
  \hline \hline
\multicolumn{9}{c}{$\mu=0.7$}\\
\hline {$N$}&\multicolumn{2}{c|}{$\gamma=1$} &\multicolumn{2}{c|}{$\gamma=3$}
&\multicolumn{2}{c|}{$\gamma=5$}&\multicolumn{2}{c|}{$\gamma=7$}\\
\hline
   10& 1.792e-02&      & 5.149e-03&          & 3.625e-03&      & 3.991e-03&       \\
   20& 1.446e-02& 0.309& 2.905e-03& 8.258e-01& 1.318e-03& 1.459& 1.121e-03& 1.832\\
   40& 1.160e-02& 0.318& 1.577e-03& 8.810e-01& 4.673e-04& 1.496& 3.052e-04& 1.877\\
   80& 9.290e-03& 0.321& 8.479e-04& 8.955e-01& 1.652e-04& 1.499& 7.981e-05& 1.935\\
  160& 7.447e-03& 0.319& 4.547e-04& 8.989e-01& 5.843e-05& 1.500& &\\ 
\hline
\end{tabular}
\end{center}
\end{table}
Next, we test the performance of the spatial FEs discretizaton of the scheme \eqref{eq: DGFM}. A uniform spatial
mesh that consists of $M$ subintervals where each is of width  $h$ will be used. We refine the  time mesh such that
the spatial error is dominating. By Theorem \ref{thm:error-bound-abstract}, a convergence of order $O(h^2)$ is expected. We illustrate
these results in Table \ref{tab: space convergence}.
\begin{table}
\renewcommand{\arraystretch}{1}
\begin{center}
\caption{Errors and  convergence rates in space with $\mu=0.3, 0.5$ and $0.7$.}  \label{tab: space convergence}
\begin{tabular}{|r|rr|rr|rr|}
\hline {$M$}&\multicolumn{2}{c|}{$\mu=0.3$} &\multicolumn{2}{c|}{$\mu=0.5$}
&\multicolumn{2}{c|}{$\mu=0.7$}\\
\hline
 10& 1.2156e-02&       & 1.2780e-02&       & 1.2563e-02&        \\
 20& 3.1130e-03& 1.9653& 3.2743e-03& 1.9646& 3.1768e-03& 1.9836\\
 40& 7.8803e-04& 1.9820& 8.2897e-04& 1.9818& 7.9873e-04& 1.9918\\
 80& 1.9826e-04& 1.9909& 2.0864e-04& 1.9903& 2.0029e-04& 1.9956\\
160& 4.9724e-05& 1.9954& 5.2355e-05& 1.9946& 5.1065e-05& 1.9717\\
\hline
\end{tabular}
\end{center}
\end{table}

\providecommand{\bysame}{\leavevmode\hbox to3em{\hrulefill}\thinspace} \providecommand{\MR}{\relax\ifhmode\unskip\space\fi MR }
\providecommand{\MRhref}[2]{%
  \href{http://www.ams.org/mathscinet-getitem?mr=#1}{#2}
} \providecommand{\href}[2]{#2}


\end{document}